\newtheorem{thm}{Theorem}[section]
\newtheorem{cor}[thm]{Corollary}
\newtheorem{prop}[thm]{Proposition}
\newtheorem{rmk}[thm]{Remark}
	\def\uplodadedmylogic{}
\newcommand{\str}[1]{\StrLen{#1}[\tmplength]\ifthenelse{1=\tmplength{}}{\mathcal{#1}}{\underline{#1}}}
\newcommand{\A}{\str{A}}
\newcommand{\M}{\str{M}}
\newcommand{\stru}[2]{\lara{#1,#2}} % struktura
\newcommand{\lang}[1]{\lara{#1}} % signatura jazyka
\newcommand{\thn}[1]{\mathrm{#1}} %theory name
\newcommand{\PA}{\thn{PA}}
\newcommand{\PrA}{\thn{Pr}}
\newcommand{\LA}[1][{}]{\thn{LA}_{#1}}
\newcommand{\DeLOs}[2]{{\tiny\ifthenelse{\equal{#1}{#2}}{\thn{DeLO}^{#1}}{\ifthenelse{\equal{#2}{}}{\thn{DeLO}^{#1}}{\thn{DeL0}^{\vpair{$#1$}{$#2$}}}}}}
\newcommand{\lekv}{\leftrightarrow}
\newcommand{\limp}{\rightarrow}
\renewcommand{\land}{\,\&\,}
\newcommand{\IFF}{\Leftrightarrow}
\newcommand{\PAK}{\Rightarrow}
\newcommand{\Th}[1]{Th(#1)}
\newcommand{\landef}[2][{}]{\csname Ln#2\endcsname{#1} = \csname L#2\endcsname{#1}}
\newcommand{\landefl}[2][{}]{$\landef[#1]{#2}$, kde \csname Ld#2\endcsname{#1}}
\def\LnfGrpA/{aditivní jazyk teorie grup}
\def\LnFGrpA/{Aditivní jazyk teorie grup}
\def\LnfOrd/{jazyk teorie uspořádání}
\def\LnFOrd/{Jazyk teorie uspořádání}
\def\LnfPrv/{výrokový jazyk s množinou prvovýroků $\Prv$}
\def\LnFPrv/{Výrokový jazyk s množinou prvovýroků $\Prv$}
\def\LnfAr/{jazyk aritmetiky}
\def\LnFAr/{Jazyk aritmetiky}
\def\LnfArZ/{jazyk $\Z$-aritmetiky}
\def\LnFArZ/{Jazyk $\Z$-aritmetiky}
\def\LnfArPA/{jazyk Peanovy aritmetiky}
\def\LnFArPA/{Jazyk Peanovy aritmetiky}
\def\LnfArZPA/{jazyk $\Z$-Peanovy aritmetiky}
\def\LnFArZPA/{Jazyk $\Z$-Peanovy aritmetiky}
\def\LnfArA/{aditivní jazyk aritmetiky}
\def\LnFArA/{Aditivní jazyk aritmetiky}
\def\LnfArAA/{jazyk aditivní aritmetiky}
\def\LnFArAA/{Jazyk aditivní aritmetiky}
\def\LnfArZAA/{jazyk $\Z$-aditivní aritmetiky}
\def\LnFArZAA/{Jazyk $\Z$-aditivní aritmetiky}
\def\LnfArLA/{jazyk lineární aritmetiky}
\def\LnFArLA/{Jazyk lineární aritmetiky}
\def\LnfArZLA/{jazyk $\Z$-lineární aritmetiky}
\def\LnFArZLA/{Jazyk $\Z$-lineární aritmetiky}
\def\LnfArKLA/{jazyk $\kappa$-lineární aritmetiky}
\def\LnFArKLA/{Jazyk $\kappa$-lineární aritmetiky}
\def\LnfArKZLA/{jazyk $\kappa$-$\Z$-lineární aritmetiky}
\def\LnFArKZLA/{Jazyk $\kappa$-$\Z$-lineární aritmetiky}
\def\LnfMod/{jazyk modulů nad okruhem}
\def\LnFMod/{Jazyk modulů nad okruhem}
\def\LnfSuc/{jazyk následníka}
\def\LnFSuc/{Jazyk následníka}
\def\LnfSucO/{jazyk následníka s nulou}
\def\LnFSucO/{Jazyk následníka s nulou}
\newcommand{\Prv}{\mathbb{P}}
\def\DNF/{DNF}
\def\CNF/{CNF}
	\def\uplodadedmygeneral{}
\newcommand{\N}{\mathbb{N}}
\newcommand{\Z}{\mathbb{Z}}
\newcommand{\R}{\mathbb{R}}
\newcommand{\lr}[1]{\{#1\}}
\newcommand{\lara}[1]{\langle #1\rangle}
\newcommand{\set}[2]{\lr{#1;#2}}
\newcommand{\vect}[1]{\overline{#1}}
\newcommand{\restr}{\upharpoonright}
\newcommand{\vpairg}[3]{{\newbox\horni \newbox\dolni \setbox\horni=\hbox{#1} \setbox\dolni = \hbox{#2} \newdimen\zdeposun \newdimen\wddolni \newdimen\wdhorni \setlength{\wddolni}{\wd\dolni} \setlength{\wdhorni}{\wd\horni} \setlength{\zdeposun}{-\wdhorni-\wddolni/2+\wdhorni/2} {\lower -#3 \hbox{#1}}\kern\zdeposun {\lower #3 \hbox{#2}}}} 
\newcommand{\vpair}[2]{\vpairg{#1}{#2}{.85ex}}
	\def\uplodadedmytextstructuring{}
\newcommand{\idf}[3]{\label{#1}\csname index#2\endcsname[|textbf]#3} % in-line definition \idf{label}{name}{definition}
\newcommand{\refer}[1]{(\ref{#1})}
\newenvironment{piecewise}{\left\{\begin{array}{rl}}{\end{array}\right.}
\newcommand{\benum}{\begin{enumerate}}
\newcommand{\eenum}{\end{enumerate}}
\newcommand{\bitem}{\begin{itemize}}
\newcommand{\eitem}{\end{itemize}}
\newcommand{\uvz}[1]{``#1''} % anglicke uvozovky
\newcommand{\axiominfo}[7]{
\begin{tabular}{#7}
Znění: & #1\\
Význam: & #2
\ifthenelse{\equal{#4}{}}{}{\\#3 & #4}%
\ifthenelse{\equal{#6}{}}{}{\\#5 & #6}%
\end{tabular}
}
\newcommand{\repeatstatement}[4][thm]% #1= typ statementu (thm,prop,...) #2 = jmeno labelu, #3=nazev tvrzeni, #4=zneni tvrzeni
{
{

\begin{#1}[#3]
#4
\end{#1}
\addtocounter{statement}{-1}
}
}
\newcounter{thislevel}
\newcommand{\mysect}[2]{\setcounter{thislevel}{\value{mysectlevel}} \addtocounter{thislevel}{#1}
\if\arabic{thislevel}0\part{#2}\fi
\if\arabic{thislevel}1\chapter{#2}\fi
\if\arabic{thislevel}2\section{#2}\fi
\if\arabic{thislevel}3\subsection{#2}\fi
\if\arabic{thislevel}4\subsubsection{#2}\fi
\if\arabic{thislevel}5\paragraph{#2}\fi
\if\arabic{thislevel}6\subparagraph{#2}\fi}
\newcommand{\linlang}[1][]{L^{lin}_{#1}}
\newcommand{\sLA}[1][]{\LA[#1]^{\#}}
\begin{document}

\title{A Wild Model of Linear Arithmetic and Discretely Ordered Modules}

%% Information for the first author.
\author[P. Glivick\' y]{Petr~Glivick\' y%
}

\address{Petr Glivick\' y: Academy of Sciences of the Czech Republic, Institute of Mathematics\\ 
\v Zitn\' a 25, 115 67 Praha~1, Czech Republic}
\address{Petr Glivick\' y: University of Economics, Department of Mathematics\\ 
Ekonomick\' a 957, 148 00 Praha~4, Czech Republic}
%%
%%    Information for the second author
\author[P. Pudl\' ak]{Pavel Pudl\' ak}

\address{Pavel Pudl\' ak: Academy of Sciences of the Czech Republic, Institute of Mathematics\\ 
\v Zitn\' a 25, 115 67 Praha~1, Czech Republic}

\thanks{The research leading to these results has received funding from the European Research Council under the European Union's Seventh Framework Programme (FP7/2007-2013) / ERC grant agreement n${}^{\circ}$\ 339691. This paper was processed with contribution of long term institutional support of research activities by Faculty of Informatics and Statistics, University of Economics, Prague.}

%%
%%    Information for the third author
%%\author[T. Author]{Third Author\inst{2,}\footnote{Third author footnote.}}
%%
%%\dedicatory{Dedicatory}

%\author{\textsc{Petr~Glivick\' y}}
%%\affil{Academy of Sciences of the Czech Republic, Institute of Mathematics}
%\address{\textsc{\hphantom{Petr~Glivick\' y}}: Academy of Sciences of the Czech Republic, Institute of Mathematics\\ 
%\v Zitn\' a 25, 115 67 Praha~1, Czech Republic}
%\address{\textsc{\hphantom{Petr~Glivick\' y}}: University of Economics, Department of Mathematics\\ 
%Ekonomick\' a 957, 148 00 Praha~4, Czech Republic}
%%\email{glivicky@math.cas.cz}
%%\email{petrglivicky@gmail.com}
%
%\author{\textsc{Pavel Pudl\' ak}}
%\address{\textsc{\hphantom{Petr~Glivick\' y}}: Academy of Sciences of the Czech Republic, Institute of Mathematics\\ 
%\v Zitn\' a 25, 115 67 Praha~1, Czech Republic}
%%\email{pudlak@math.cas.cz}
%
%\thanks{The research leading to these results has received funding from the European Research Council under the European Union's Seventh Framework Programme (FP7/2007-2013) / ERC grant agreement n${}^{\circ}$\ 339691.}

%    General info
\subjclass[2010]{Primary 03C62, 03C45; Secondary 06F25}

\date{February 8, 2016}

\keywords{linear arithmetic, ordered modules, NIP}

\begin{abstract}
Linear arithmetics are extensions of Presburger arithmetic ($\PrA$) by one or more unary functions, each intended as multiplication by a fixed element (scalar), and containing the full induction schemes for their respective languages. 

In this paper we construct a model $\M$ of the $2$-linear arithmetic
$\LA[2]$ (linear arithmetic with two scalars) in which an infinitely long initial
segment of \uvz{Peano multiplication} on $\M$ is
$\emptyset$-definable. This shows, in particular, that $\LA[2]$ is not
model complete in contrast to theories $\LA[1]$ and $\LA[0]=\PrA$ that are known to satisfy quantifier elimination up to disjunctions of primitive positive formulas.

As an application, we show that $\M$, as a discretely ordered module over the discretely ordered ring generated by the two scalars, is not NIP, answering negatively a question of Chernikov and Hils.
%\cite[Question 5.9.1]{ChH14}.
\end{abstract}

\maketitle

\section{Introduction}

%Co to jsou linearni aritmetiky
There is longstanding interest in definability and related properties of various extensions of Presburger arithmetic ($\PrA = \Th{\stru{\N}{0,1,+,\leq}}$) by fragments of multiplication (see \cite{Bes02} for a good survey).
One class of such extensions are linear arithmetics, introduced in \cite{Gli13dis} (but various similar situations were studied much earlier, see \cite{Gli15LAs} for details). For any cardinal number $\kappa$, the $\kappa$-linear arithmetic $\LA[\kappa]$ is an arithmetical theory containing the full induction scheme for its language $\lang{0,1,+,\leq,a_\alpha}_{\alpha\in\kappa}$, where each $a_\alpha$ is a unary function symbol intended (and axiomatized) as multiplication by one fixed element (for the precise definition, see Section \ref{sect:lineararith}).

%This forms a hierarchy.

%Vyznam linearnich aritmetik:
%$\LA[0] = \PrA$
%Connected with - discretely ordered modules, structure of models of PA, two-sorted Presburger

\medskip

%Co je o linearnich aritmetikach znamo:
The theory $\LA[0]$ is just $\PrA$. Its definability properties are well understood. In particular, every formula is in $\PrA$ equivalent to a disjunction of bounded primitive positive formulas (bounded pp-formulas; i.e. formulas of the form $(\exists \vect{x}<\vect{t}) \chi(\vect{x},\vect{y})$, where $\chi$ is a conjunction of atomic formulas), hence $\LA[0]$ is model complete. Also it is a decidable theory.
%(expressing exactly existence of solutions of systems of linear inequalities)
The same properties -- bounded pp-elimination (\cite{Gli13dis} or \cite{Gli15LAs}), (consequently)
model completeness and decidability (\cite{Pen71} and independently \cite{Gli13dis} or \cite{Gli15LAs}) -- have been be shown also for $\LA[1]$.
%$\LA[1]$ satisfies quantifier elimination up to disjunctions of bounded positive primitive formulas, hence is model complete. Furthermore, it is decidable, nice description of its complete extensions is known, and allows certain geometry.
For $\kappa\geq 2$, nevertheless, $\LA[\kappa]$ was only known to
satisfy quantifier elimination up to bounded formulas
\cite{Gli14boundedQELAs}. 
%Penzin showed \cite{Pen71} that for some $n>1$, the universal theory of the extension of $(\Z,0,1,+)$
For more results on model theory of linear
arithmetics, see \cite{Gli15LAs} and \cite{Gli14boundedQELAs}.

%(Mention van den Dries + Holly a Weispfenning?)

\medskip

%Co dokážeme - hlavní výsledek
In this paper, we prove that the theories $\LA[\kappa]$ with
$\kappa\geq 2$ are not model complete. We do this by constructing a
model $\M=\stru{M}{0,1,+,\leq,a,b}\models\LA[2]$ such that for some
$L\in M$ nonstandard, an operation of partial Peano multiplication $\cdot: [0,L]^2\rightarrow M$ is $\emptyset$-definable in $\M$ (see Theorem
\ref{thm:main}). Here, $\emptyset$-definable means definable without
parameters and partial Peano multiplication is an operation $\cdot$ that can be extended to the whole $M^2$ in such a way that $\stru{M}{0,1,+,\cdot,\leq}$ is a model of Peano arithmetic ($\PA$).

Note that, due to the bounded quantifier elimination in $\LA[\kappa]$,
in no model $\M$ of $\LA[\kappa]$ Peano multiplication is definable on the whole $M^2$.

\medskip

%Aplikace na Chernikova a Hilse
As an application of the above result, we show 
in section \ref{sect:nonNIP} that the constructed model $\M\models\LA[2]$ endowed with a natural structure of a (discretely) ordered module %over the ordered ring $\Z[a,b]$ 
has the independence property. This answers negatively the question of Chernikov and Hils \cite[Question 5.9.1]{ChH14} whether all ordered modules are NIP.

Let us note that $\LA[1]$ (as well as $\LA[0] = \PrA$) is NIP, which follows easily from the quantifier elimination results in \cite{Gli13dis}, see \cite{Gli15LAs}.

\medskip

Finally, let us remark that an analogous problem of definability of multiplication in expansions of the structure $\stru{\R}{0,+,<}$ has been studied and that it exhibits surprisingly similar features as the problem of definability of multiplication in linear arithmetics (in particular, continued fractions are used in both cases). See \cite{HT14} and \cite{Hie16} for results formally closest to those in this paper.

\section*{Acknowledgments}
The authors want to thank the anonymous referee for a careful reading of the paper and for useful comments that helped improve the quality of this text.

\section{Preliminaries}

\subsection{Linear arithmetics}\label{sect:lineararith}
For any cardinal number $\kappa$, \emph{$\kappa$-linear arithmetic} $\LA[\kappa]$ is the theory in the language $\linlang[\kappa]=\lang{0,1,+,\leq,a_\alpha}_{\alpha\in\kappa}$ (where $a_\alpha$ are unary function symbols) with the following axioms:
% \begin{center}
% \begin{tabular}{cccc}
% (A1) & $0\neq z+1$ & (A2) & $x+1=y+1\limp x=y$\\
% (A3) & $x+0=x$ & (A4) & $x+(y+1)=(x+y)+1$
% \end{tabular}\\
% \smallskip
% 
% (D${}_\leq$) $x\leq y \lekv (\exists z) (x+z=y)$
% 
% \smallskip
% \begin{tabular}{cccc}
% (L1) & $a_{\alpha}(x+1) = a_{\alpha} x + a_{\alpha} 1$ & (L2) & $a_{\alpha}(a_{\beta}x)=a_{\beta}(a_{\alpha}x)$
% \end{tabular}\\
% 
% \medskip
% (Ind) $\varphi(0,\vect{y}) \land (\varphi(x,\vect{y})\limp \varphi(x+1,\vect{y})) \limp (\forall x)\varphi(x,\vect{y})$ for all formulas $\varphi(x,\vect{y})$
% \end{center}

\medskip
\def\arraystretch{1.5}
\begin{tabular}{cccc}
(A1) & $0\neq z+1$, & (A2) & $x+1=y+1\limp x=y$,\\
(A3) & $x+0=x$, & (A4) & $x+(y+1)=(x+y)+1$,
\end{tabular}

\begin{tabular}{cc}
(D${}_\leq$) & $x\leq y \lekv (\exists z) (x+z=y)$,
\end{tabular}

\begin{tabular}{cccc}
(L1) & $a_{\alpha}(x+1) = a_{\alpha} x + a_{\alpha} 1$, & (L2) & $a_{\alpha}(a_{\beta}x)=a_{\beta}(a_{\alpha}x)$,
\end{tabular}

\begin{tabular}{cc}
(Ind) & $\varphi(0,\vect{y}) \land (\varphi(x,\vect{y})\limp \varphi(x+1,\vect{y})) \limp (\forall x)\varphi(x,\vect{y})$ for all formulas $\varphi(x,\vect{y})$.
\end{tabular}

\medskip\noindent
\emph{Strictly $\kappa$-linear arithmetic} $\sLA[\kappa]$ is the extension of $\LA[\kappa]$ by the axiomatic scheme

\medskip
\begin{tabular}{cc}
(L${}^{\#}$) & \uvz{$a_{\alpha}$ is not definable by any formula not containing $a_{\alpha}$}.
\end{tabular}

\subsection{Models of $\LA[\kappa]$ as discretely ordered modules}
\label{subsect:modules}
Every $\M\models\LA[\kappa]$ naturally corresponds to a discretely
ordered module over a ring $R$ given by $\set{a_\alpha}{\alpha\in\kappa}$. 
Thus models of linear arithmetics can be understood as certain (in particular satisfying induction) discretely ordered modules. When $\M$ is viewed in this way, we call the elements of the ring $R$ scalars.
Below we describe this correspondence in more detail.

\medskip

$\LA[\kappa]$ proves that all elements are non-negative, but given a model $\M\models\LA[\kappa]$, it is often useful to formally add negative elements to $\M$ and to work with this extension rather than with $\M$ itself. In the rest of this paper we will not explicitly distinguish between these two structures and denote both simply by $\M$. This should not cause any confusion as the correct interpretation will always be clear from the context.

\medskip

In every $\M\models\LA[\kappa]$, multiplication by any polynomial $p\in\Z[a_\alpha]_{\alpha\in\kappa}$ can be naturally defined. Thus $\M$ can be equipped with a structure of an (unordered) $\Z[a_\alpha]_{\alpha\in\kappa}$-module. 
It can be also viewed as an ordered module over the ordered ring %$\Z(\M)=\set{p1}{p\in\Z[a_\alpha]_{\alpha\in\kappa}}$
$$\Z(\M):=\Z[a_\alpha]_{\alpha\in\kappa}/Ann_{\Z[a_\alpha]_{\alpha\in\kappa}}(M),$$ 
where $Ann$ denotes annihilator and the ordering of $\Z(\M)$ is induced by the ordering of $\M$ via the map $[p]\mapsto p1$. 

Let us note that, by induction in $\M$, $Ann_{\Z[a_\alpha]_{\alpha\in\kappa}}(M)=Ann_{\Z[a_\alpha]_{\alpha\in\kappa}}(\{1\})$. Therefore $\Z(\M)=\Z[a_\alpha]_{\alpha\in\kappa}$ (i.e. $\M$ is a faithful $\Z[a_\alpha]_{\alpha\in\kappa}$-module) if and only if all $a_\alpha$'s are algebraically independent over $\Z$ in $\M$.

Notice that if $\M$ has the structure of an $R$-module (for any ring $R$), then the map $r\mapsto r1$ is a homomorphism from $R$ (as a module over itself) to $\M$. We will often identify the scalar $r$ with the element $r1\in M$.

\subsection{Euclidean algorithm and continued fractions}
The proof of our main result is based on certain elementary properties
of continued fractions that are provable in Peano arithmetic. We review all what is necessary here. A more detailed exposition can be found in \cite[Chapters I and II]{Khinchinbook}.

\medskip

Let $\M$ be a model of Peano arithmetic (PA). In this subsection, we will work in $\M$. This means that all quantifications, unless explicitly stated otherwise, are restricted to $M$. In the calculations, however, we will use negative elements and fractions freely.

\medskip

Let us fix $0<b<a\in M$. The Euclidean algorithm starting from $(a,b)$ produces the division chain
\begin{equation}
\label{rrek}
r_{i-2}=r_{i-1}a_i+r_i,
\end{equation}
for $i=0,\ldots,n$, $n\in M$, where $r_{-2}=a$,
$r_{-1}=b>r_0>r_1>\dots>r_n=0$, $r_{n-1}=\gcd(a,b)$, and
$$[a_0;a_1,\ldots,a_n]=a_0+\frac{1}{a_1+\frac{1}{\ddots+\frac{1}{a_n}}}=\frac{a}{b}$$ is the continued fraction of $\frac{a}{b}$.

The numerators and denominators of the convergents $\frac{v_i}{u_i}=[a_0;\ldots,a_i]$ (in the lowest terms) satisfy the recursive relations
\begin{equation}
\label{uvrek}
u_i=u_{i-1}a_i+u_{i-2};\ \ \ \ v_i=v_{i-1}a_i+v_{i-2},
\end{equation}
for $i=0,\ldots,n$, where we set $u_{-1}=v_{-2}=0$ and
$u_{-2}=v_{-1}=1$ (see \cite[Theorem 1]{Khinchinbook} for a proof). Clearly, $0<u_0<u_1<\dots<u_n=b$ and $0\leq
v_0<v_1<\dots<v_n=a$.

From \refer{rrek} and \refer{uvrek} it follows
\begin{equation}
\label{eq:riuivi}
r_i = (-1)^i(a u_i-b v_i),
\end{equation}
for $i=-2,\ldots,n$, %Moreover $u_i,v_i$ are the unique numbers in the intervals $[]$
which can be rewritten as 
$$
\frac{a}{b}-\frac{v_i}{u_i}=(-1)^i\frac{r_i}{b u_i}.
$$
%From the last equation we can conclude that the even order convergents form an increasing sequence below $\frac{a}{b}$, odd order convergents form a decreasing sequence above $\frac{a}{b}$ and every convergent is closer to $\frac{a}{b}$ than the previous one.
From the last equation we can conclude that
$$
\frac{v_0}{u_0}<\frac{v_2}{u_2}<\cdots< \frac{v_n}{u_n} = \frac{a}{b} < \cdots \frac{v_3}{u_3}<\frac{v_1}{u_1}
$$
and that $|\frac{a}{b}-\frac{v_i}{u_i}|$ is decreasing (see also \cite[Theorem 4]{Khinchinbook}). 

Not only the convergents approximate $\frac{a}{b}$ in this sense, they are exactly all the \uvz{best
  rational approximations of $\frac{a}{b}$ of the second kind},
i.e. the following holds true (provably in PA):
\begin{prop}(see e.g. \cite[Theorems 16 and 17]{Khinchinbook})\\
\label{prop:brapp}
For $u,v\in M$, $u>0$, the following are equivalent:
\benum[1)]
\item $|au-bv|<|au'-bv'|$ for all $\frac{v'}{u'}\neq \frac{v}{u}$ with $0<u'\leq u$.
\item There is $0\leq i\leq n$ such that $\frac{v}{u}=\frac{v_i}{u_i}$.
\eenum
with the exception of $\frac{a}{b}=a_0+\frac{1}{2}$, for which only $1)\PAK 2)$.
\end{prop}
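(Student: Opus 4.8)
The plan is to work throughout inside the fixed model $\M\models\PA$ and to reduce the entire statement to linear bookkeeping over $\Z$ (carried out inside $\M$), all of it by induction available in $\PA$. The engine is a single decomposition lemma. First I would prove, by induction on $i$ from the recursions \refer{uvrek}, the determinant identity $u_iv_{i+1}-u_{i+1}v_i=(-1)^i$ for $-1\le i<n$. Since this determinant is a unit, the pair $(u_i,v_i),(u_{i+1},v_{i+1})$ is a basis of $\Z^2$ inside $\M$, so every $(u',v')$ can be written as $u'=\alpha u_i+\beta u_{i+1}$, $v'=\alpha v_i+\beta v_{i+1}$ with $\alpha,\beta\in\Z$. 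Feeding this into \refer{eq:riuivi} (which gives $au_j-bv_j=(-1)^jr_j$) yields the key formula
$$au'-bv'=(-1)^i(\alpha r_i-\beta r_{i+1}),$$
valid for $0\le i<n$. Everything below is extracted from this identity together with the monotonicities $0<u_0<\dots<u_n=b$ and $b=r_{-1}>r_0>\dots>r_n=0$ already recorded.

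For $2)\PAK 1)$ I would fix $v/u=v_i/u_i$ and a competitor $v'/u'\ne v_i/u_i$ with $0<u'\le u_i$. The case $i=n$ is immediate: $r_n=0$ and $v_n/u_n=a/b$, so any other fraction has $au'-bv'\ne 0$, hence $|au'-bv'|\ge 1>r_n$. For $i<n$ I use the decomposition: $v'/u'\ne v_i/u_i$ forces $\beta\ne 0$, while $0<u'\le u_i<u_{i+1}$ forces $\alpha\ne 0$ and $\alpha,\beta$ of opposite signs (the same sign would push $u'$ either above $u_i$ or below $0$). Then the two summands in the key formula reinforce, giving $|au'-bv'|=|\alpha|r_i+|\beta|r_{i+1}\ge r_i+r_{i+1}$, which is strictly larger than $r_i$ whenever $r_{i+1}>0$, i.e. whenever $i<n-1$.

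The one delicate point — and the source of the stated exception — is the last step $i=n-1$, where $r_{i+1}=r_n=0$ and the bound degenerates to $|au'-bv'|=|\alpha|r_{n-1}$, which only gives $\ge r_{n-1}$ and fails to be strict precisely when $|\alpha|=1$. Here I would analyse the boundary case by hand: writing $u'=\alpha u_{n-1}+\beta b$ with $\alpha,\beta$ of opposite signs and $0<u'\le u_{n-1}$, and using $a_n=r_{n-2}/r_{n-1}\ge 2$ (forced by $r_{n-2}>r_{n-1}$), one finds $|\alpha|=1$ is possible only when $n=1$ and $a_1=2$, i.e. exactly when $a/b=a_0+\frac12$; and in that case the competitor $(a_0+1)/1$ does achieve equality, so $2)\PAK 1)$ genuinely fails for $v_0/u_0$. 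I expect this to be the hardest part, since it is where the clean inequality breaks and the exception must be pinned down exactly.

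For $1)\PAK 2)$ — which I expect to hold without exception — I argue contrapositively. If $u\ge u_n=b$, comparing $v/u$ with $v_n/u_n=a/b$ through $1)$ forces $|au-bv|<0$ unless $v/u=a/b=v_n/u_n$, so $v/u$ is a convergent. Otherwise $u_0\le u<u_n$ and I locate $i$ with $u_i\le u<u_{i+1}$, and prove the following claim: for every $(u',v')$ with $0<u'<u_{i+1}$ one has $|au'-bv'|\ge r_i$. This is the same decomposition computation, split on the sign of $\beta$: for $\beta=0$ one gets $|au'-bv'|=|\alpha|r_i\ge r_i$, while $\beta\ne 0$ combined with $u'<u_{i+1}$ again forces opposite signs and hence a value $\ge r_i$. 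Granting the claim, suppose $v/u$ satisfied $1)$ but were not a convergent; then $v_i/u_i\ne v/u$ has denominator $u_i\le u$, so $1)$ would give $|au-bv|<r_i$, contradicting the claim applied to $(u,v)$ (legitimate since $u<u_{i+1}$). Hence every best approximation of the second kind is one of the $v_i/u_i$. Since all the inductions and the integer arithmetic used are available in $\PA$, the whole proposition is provable there.
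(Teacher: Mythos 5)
Your proof is correct, but there is nothing in the paper to compare it against line by line: the authors do not prove this proposition, they quote it from Khinchin (Theorems 16 and 17) with only the implicit claim that the argument formalizes in $\PA$. What you supply for the direction $2)\PAK 1)$ is essentially Khinchin's own proof of Theorem 17: the determinant identity $u_iv_{i+1}-u_{i+1}v_i=(-1)^i$, the resulting unimodular decomposition $(u',v')=\alpha(u_i,v_i)+\beta(u_{i+1},v_{i+1})$, and the identity $au'-bv'=(-1)^i(\alpha r_i-\beta r_{i+1})$ together with the opposite-sign analysis; your treatment of the boundary case $i=n-1$ is right, and the chain $a_n\ge 2$, hence $u_n\ge 2u_{n-1}+u_{n-2}$, hence $|\alpha|=1$ forces $u_{n-2}=0$, $n=1$, $a_1=2$, does pin down the exception $a/b=a_0+\tfrac12$ exactly, with the competitor $(a_0+1)/1$ witnessing failure. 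Where you genuinely diverge from the classical source is $1)\PAK 2)$: Khinchin proves Theorem 16 by a separate comparison with neighbouring convergents, whereas you re-use the same decomposition to get the uniform lower bound $|au'-bv'|\ge r_i$ for all $0<u'<u_{i+1}$ and derive the contrapositive from it. That buys a single engine for both directions and makes the $\PA$-formalizability transparent (bounded induction on the recurrences plus sign bookkeeping in the ring of differences). One caveat you should state explicitly: in the direction $2)\PAK 1)$ you tacitly take $(v,u)=(v_i,u_i)$ --- your competitors are bounded by $u_i$, not by $u$. This is the intended (and only tenable) reading, since for a non-reduced pair $(kv_i,ku_i)$ with $k\ge 2$ condition $1)$ can fail outside the stated exception; it is also exactly the form that Corollary \ref{corapprox} uses, where coprimality is reinstated explicitly. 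So the gap is in the transcription of the statement, not in your argument, but you are proving the proposition for the reduced representative of the fraction in that direction and should say so.
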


%We will need the following easy corollary:
The above proposition gives a definition of the set of all convergents $\frac{v_i}{u_i}$ of $\frac{a}{b}$ by a bounded formula that uses
multiplications only by $a$ and $b$. 
We want a similar definition of the set of pairs $(u_i,v_i)$. Therefore we prove:

%\begin{cor}
%\label{corapprox}
%Assume $\frac{a}{b} \neq a_0+\frac{1}{2}$.
%Then for $u,v\in M$, $u>0$, it is equivalent:
%\benum[1)*]
%\item For all $(v',u')\neq (v,u)$ (as pairs) with $0<u'\leq u$, it is $|au-bv|<|au'-bv'|$.
%\item There is $0\leq i\leq n$ such that $v=v_i$ and $u=u_i$.
%\eenum
%Moreover, if $0<u\leq b$, the following is equivalent to 1)* and 2)*:
%\benum[3)*]
%\item For all $(v',u')\neq (v,u)$ (as pairs) with $0<u'\leq u$ and $0\leq v' \leq a$, it is $|au-bv|<|au'-bv'|$.
%\eenum
%\end{cor}

\begin{cor}
\label{corapprox}
Assume $\frac{a}{b} \neq a_0+\frac{1}{2}$.
Then for $u,v\in M$, $u>0$, the following are equivalent:
\benum
\item[0*)] $|au-bv|<|au'-bv'|$ for all $(v',u')\neq (v,u)$ with $0<u'\leq u,b$ and $0\leq v' \leq a$.
\item[1*)] $|au-bv|<|au'-bv'|$ for all $(v',u')\neq (v,u)$ with $0<u'\leq u$.
\item[2*)] There is $0\leq i\leq n$ such that $v=v_i$ and $u=u_i$.
\eenum
\end{cor}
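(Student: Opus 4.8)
The plan is to establish the cycle $1^*)\Rightarrow 0^*)\Rightarrow 2^*)\Rightarrow 1^*)$, reducing every step to Proposition~\ref{prop:brapp}; the standing hypothesis $\frac ab\neq a_0+\frac12$ is exactly what makes both directions $1)\Leftrightarrow 2)$ of that proposition available. Two of the three implications are light. For $1^*)\Rightarrow 0^*)$ it suffices to observe that the family of pairs $(v',u')$ quantified over in $0^*)$ is a subfamily of the one in $1^*)$ (we have merely added the constraints $u'\le b$ and $0\le v'\le a$), so the inequality demanded by $0^*)$ is a special case of the one granted by $1^*)$. For $2^*)\Rightarrow 1^*)$, Proposition~\ref{prop:brapp} in the direction $2)\Rightarrow 1)$ already yields $|au-bv|<|au'-bv'|$ whenever $\frac{v'}{u'}\neq\frac vu$ and $0<u'\le u$; the only gap between this and $1^*)$ consists of pairs with $\frac{v'}{u'}=\frac vu=\frac{v_i}{u_i}$ but $(v',u')\neq(v,u)$, and these are ruled out because $\frac{v_i}{u_i}$ is reduced: from $\gcd(u_i,v_i)=1$ and $0<u'\le u=u_i$ one forces $(v',u')=(v_i,u_i)=(v,u)$, a contradiction.

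The substantial implication is $0^*)\Rightarrow 2^*)$, which I would split according to the size of $u$. If $u\ge b$, then $(v',u')=(a,b)=(v_n,u_n)$ satisfies $0<u'\le\min(u,b)$ and $0\le v'\le a$, so it is one of the competitors in $0^*)$; since $|a\cdot b-b\cdot a|=0$, the strict inequality of $0^*)$ is tenable only if $(v,u)=(a,b)$, giving $2^*)$ with $i=n$. If instead $u<b$, the competitor box of $0^*)$ collapses to $\{\,0<u'\le u,\ 0\le v'\le a\,\}$, and the goal is to recover condition $1)$ of Proposition~\ref{prop:brapp}. For competitors with $0\le v'\le a$ this is immediate from $0^*)$; the real work is to dispose of competitors with $v'<0$ or $v'>a$.

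Here the key estimates are that for $0<u'\le u<b$ one has $|au'-bv'|>a$ in both out-of-range cases, while the cheap competitor $(a_0,1)=(v_0,u_0)$ (which lies in the box, as $a_0=\lfloor a/b\rfloor$ satisfies $0\le a_0\le a$) forces through $0^*)$ the a~priori bound $|au-bv|\le r_0<b<a$. Combining these, $|au-bv|<a<|au'-bv'|$ for every out-of-range $v'$, so the inequality holds automatically there as well. Thus $0^*)$ implies condition $1)$, and Proposition~\ref{prop:brapp} returns $\frac vu=\frac{v_i}{u_i}$ for some $i$.

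It then remains to promote this equality of fractions to the equality of pairs required by $2^*)$. Writing $(v,u)=(mv_i,mu_i)$ with $m\ge1$ (legitimate since $\frac{v_i}{u_i}$ is reduced), I note that $u<b$ excludes $i=n$, for $i=n$ would give $u=mb\ge b$. For $i<n$ we have $|au_i-bv_i|=r_i>0$ by \refer{eq:riuivi}, so if $m\ge2$ the pair $(v_i,u_i)$ would lie in the box of $0^*)$ and satisfy $|au-bv|=m\,r_i>r_i=|au_i-bv_i|$, contradicting $0^*)$; hence $m=1$ and $(v,u)=(v_i,u_i)$. I expect the main obstacle to be precisely this interaction of the two extra restrictions in $0^*)$: the bound $0\le v'\le a$ must be shown harmless, which requires the a~priori estimate $|au-bv|<b$ on the candidate, and the bound $u'\le b$ is what permits ruling out the non-reduced multiples $(mv_i,mu_i)$ with $m\ge2$, thereby converting merely fractional best-approximation into exact best-approximation of pairs.
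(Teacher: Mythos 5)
Your proof is correct, but it is organized differently from the paper's. The paper establishes $1^*\Leftrightarrow 2^*$ in one stroke ($1^*$ is condition $1)$ of Proposition \ref{prop:brapp} plus coprimality of $u,v$, and $2^*$ is condition $2)$ plus coprimality), and then spends its effort on $0^*\Rightarrow 1^*$: given an arbitrary competitor $(v',u')$ with $0<u'\leq u$, it writes $v'=ka+v''$, $u'=lb+u''$ with $0<v''\leq a$, $0<u''\leq b$, so that $|au'-bv'|=|au''-bv''+(l-k)ab|$, and compares this with the in-box competitor $(v'',u'')$ using the same a priori bound $|au-bv|<r_0<b$ coming from the competitor $(a_0,1)$ that you use. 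You instead prove $0^*\Rightarrow 2^*$ directly, splitting on $u\geq b$ (where the zero-valued competitor $(a,b)=(v_n,u_n)$ forces $(v,u)=(a,b)$) versus $u<b$ (where the bound $u'\leq b$ in $0^*$ is vacuous and only the range of $v'$ needs enlarging), and then recover $1^*$ from $2^*$ via reducedness of $\frac{v_i}{u_i}$. The essential ingredients are shared --- Proposition \ref{prop:brapp}, the bound via $(a_0,1)$, reducedness of convergents --- but your case split is arguably cleaner: the paper's decomposition explicitly treats only $l-k=0$ and $l-k\geq 1$, leaving the (easy) case $l-k\leq -1$ unaddressed, whereas your route never encounters it. The price is the extra step promoting $\frac{v}{u}=\frac{v_i}{u_i}$ to $(v,u)=(v_i,u_i)$ inside $0^*\Rightarrow 2^*$ (your multiples argument with $m\geq 2$), which the paper gets for free from its coprimality bookkeeping in $1^*\Leftrightarrow 2^*$.
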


\begin{proof}
\uvz{1${}^*$)$\IFF$ 2${}^*$)}: We have the following chain of
equivalences
\[ 
\mbox{1${}^*$ $\IFF$ 1 $\land$ $u,v$ are coprime $\IFF$ 2 $\land$ $u,v$
are coprime $\IFF$ 2${}^*$,} 
\]
where the second equivalence is from Proposition \ref{prop:brapp}, and
the other two are trivial.

\medskip
\uvz{1${}^*$)$\PAK$ 0${}^*$)} is obvious. 

\medskip
\uvz{0${}^*$)$\PAK$ 1${}^*$)}: Let $(v',u')\neq (v,u)$ and $0<u'\leq
u$. We prove $|au-bv|<|au'-bv'|$. First observe that  for
$(v',u')=(a_0,1)$, we get from 0${}^*$
\begin{equation}
\label{eq:prcorapprox}
|au-bv|<|a1-ba_0|=r_0<b.
\end{equation}
We may assume $v'>0$ (otherwise $|au'-bv'|=au'\geq a > b$, and we are done due to \refer{eq:prcorapprox}).
Further, let $v'=ka+v''$, $u'=lb+u''$ with $0<v''\leq a$, $0<u''\leq b$, $k,l\in M$ (notice also that $u''\leq u'\leq u$). Then $|au'-bv'|=|au''-bv''+(l-k)ab|$.
We distinguish the following cases:
\begin{itemize}
\item $l-k=0$: Then $|au''-bv''+(l-k)ab|=|au''-bv''|>|au-bv|$ by 0${}^*$).
\item $l-k\geq 1$: Then $|au''-bv''+(l-k)ab|\geq a > b > |au-bv|$, where the last inequality is due to \refer{eq:prcorapprox} and the first one due to $au''-bv''+(l-k)ab\geq a-ba+(l-k)ab\geq a$.
%\item $l-k\geq 2$: Then $|au''-bv''+(l-k)ab|\geq ab>b>|au-bv|$, where the first inequality follows from $|au''-bv''|\leq ab$, which holds since $0<au'',bv''\leq ab$.
\end{itemize}
\end{proof}

\section{Wild models of linear arithmetics}

In this section we will construct a model of the arithmetic $\LA[2]$
in which an infinite initial segment of a Peano multiplication is
definable without parameters. In fact, we will prove even a little bit
more. Say that a \emph{formula is $b$-bounded} if all quantifiers in
the formula are of the form $\exists x < b1$, $\forall x < b1$. For
the sake of simplicity, in this subsection we will write $x<b$ instead
of $x< b1$ in the quantifier bounds.

\begin{thm}
\label{thm:main}
Let $\M$ be a non-standard model of $\PA$ and $L\in M$. Let
$\M^+$ be the additive part of $\M$ and ${\cdot}$ the operation
of multiplication in $\M$. 
Then there are elements $a<b\in M$ such that $\cdot\restr [0,L]^2$ is
$\emptyset$-definable by a $b$-bounded formula in $\stru{\M^+}{a,b}$, where $a,b$ stand for unary functions of multiplication by elements $a,b$.
\end{thm}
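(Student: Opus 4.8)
The plan is to exploit the freedom to choose $a<b$ together with the definability, granted by Corollary~\ref{corapprox}, of the whole continued-fraction scaffold of $a/b$. Concretely, I would take $b\in M$ nonstandard and far larger than $L$ and prescribe the continued fraction of $a/b$ \emph{in advance}: from any finite (possibly nonstandard) sequence of partial quotients, with first quotient $0$ so that $a<b$ and avoiding the exceptional ratio of Corollary~\ref{corapprox}, one recovers $a/b$ in lowest terms by the recursions \refer{uvrek}, and this computation goes through in $\PA$; hence every admissible sequence of partial quotients is realised by a coprime pair $a,b\in M$. The idea is to pick this sequence so that the convergents $(u_i,v_i)$, laid out in their natural order, carry a complete record of the graph of $\cdot$ on $[0,L]^2$.

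First I would make the scaffold itself definable. By Corollary~\ref{corapprox}, the set of pairs $(u_i,v_i)$, $0\le i\le n$, is $\emptyset$-definable by a $b$-bounded formula in $\stru{\M^+}{a,b}$: every quantity $|au'-bv'|$ occurring in condition $0^*$ equals $|a(u')-b(v')|$, a term of $\stru{\M^+}{a,b}$, while the bounds $0<u'\le u,b$ and $0\le v'\le a$ keep every quantifier below $b$, using $a<b$ and $u\le b$. Since the $u_i$ are strictly increasing, I would then define the successor relation on this set by minimality of the denominator, so that the convergents form a $\emptyset$-definable discrete linear order — a scaffold along which the encoded data sits — and the remainders $r_i=|a(u_i)-b(v_i)|$ are available as an auxiliary $\emptyset$-definable, strictly decreasing sequence.

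With the scaffold in hand, the product $z=x\cdot y$ (for $x,y\le L$) would be defined by a $b$-bounded formula asserting the existence of the scaffold element that encodes the pair $(x,y)$ and whose associated data reads off as $z$. It is convenient to treat the squaring function first, since $x\cdot y=\tfrac12\big((x+y)^2-x^2-y^2\big)$ reduces multiplication on $[0,L]^2$ to defining $\{(t,t^2):t\le 2L\}$, a one-parameter table requiring only about $2L$ scaffold slots; addition, halving and the bound $x+y\le 2L$ are unproblematic in $\stru{\M^+}{a,b}$.

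The hard part is this encoding-and-addressing step, and I expect essentially all the difficulty to lie here. The only data a bounded formula can extract from the scaffold are additive combinations of the $u_i,v_i,r_i$ and their scalar images $a(\cdot),b(\cdot)$; individual partial quotients $a_i=\lfloor r_{i-2}/r_{i-1}\rfloor$ are not evidently definable, and — since convergent denominators grow at least geometrically — the $u_i$ cannot serve as a dense address space in which a nonstandard table is laid out value by value. The construction must therefore arrange that, from the convergent at a given scaffold position, both the address (locating $(x,y)$, resp.\ $t$) and the encoded product can be recovered by additive, $b$-bounded means, for instance through residues with respect to a base whose multiplication is realised by one of the scalars. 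Making the prescription of $a/b$ consistent with this simultaneous readability is the technical core of the theorem.
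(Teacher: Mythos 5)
Your first half coincides with the paper's: prescribe the partial quotients of $a/b$ in advance, recover $a,b$ via \refer{uvrek} inside $\PA$, use Corollary~\ref{corapprox} to $\emptyset$-define the set of convergent pairs by a $b$-bounded formula, order them by the increasing $v_i$, and reduce multiplication on $[0,L]^2$ to squaring on $[0,2L]$. But you have explicitly left unsolved exactly the step the theorem turns on, namely how a convergent is made to carry both an address $t\le 2L$ and the value $t^2$ in a form extractable by additive, bounded means. The paper's answer is to introduce a \emph{third} scalar $c$, a prime greater than $(2L)^2$, and to choose the quotients $a_i$ so that the numerators satisfy $v_{2i}\equiv t_i$ and $v_{2i+1}\equiv t_i^2 \pmod c$ for the table $(1,1^2,2,2^2,\dots,2L,(2L)^2)$; this is possible because $v_i=v_{i-1}a_i+v_{i-2}$ with $v_{i-1}\not\equiv 0\pmod c$ leaves every residue class reachable by a suitable $a_i>0$, and since $t,t^2<c$ the residues determine the actual values. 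Note that your choice $a_0=0$ would already break this: it forces $v_0=0$ and $v_1=1$ regardless of $a_1$, destroying the freedom the recursion is supposed to provide. Addressing is then by adjacency (the pair $(v_{2i},v_{2i+1})$ of consecutive convergents), not by position in a dense index set, so your worry about geometric growth of the $u_i$ is beside the point.

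Your closing suggestion of ``residues with respect to a base whose multiplication is realised by one of the scalars'' cannot be implemented with $a$ and $b$ alone: both are committed to the continued fraction, are astronomically larger than $(2L)^2$, and reducing $v_i$ modulo either of them returns $v_i$ itself. A genuinely new scalar $c$ is needed, and this creates the second missing ingredient: the theorem allows only two scalars, so one must compress $a,b,c$ into two. The paper does this by taking the scalars $\alpha=ac$ and $\alpha\beta+c=abc^2+c$, recovering $cx=((\alpha\beta+c)x)\bmod\alpha$ and $bcx=((\alpha\beta+c)x)\ \mathrm{div}\ \alpha$ for $0\le x<a$, defining $a1$ as the least positive zero of the first of these, and exploiting that the defining formula is homogeneous in $a,b$ so that multiplying both by $c$ is harmless. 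Without this (or an equivalent device) your argument proves at best the $\LA[3]$ version of the statement, not the theorem as stated.
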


Note that if $L$ is non-standard, then $\stru{\M^+}{a,b}\models\sLA[2]$ follows automatically for any $a,b$ which satisfy the rest of the statement. Indeed, if one of the scalars were definable from the other (say $b$ from $a$) then the multiplication on $[0,L]^2$ would be definable in $\stru{\M^+}{a}\models\LA[1]$, which contradicts the pp-elimination in $\LA[1]$. (For any pp-formula $\varphi(\vect{x})$ which defines an infinite set, it is easy to find $\vect{u}\neq \vect{v}$ such that $\varphi$ holds for $\vect{u},\vect{v}$ and $\frac{\vect{u}+\vect{v}}{2}$. But for $\varphi(x,y,z)$ defining the graph of multiplication over the diagonal of $[0,L]^2$, this is clearly impossible.)

\medskip

%The proof of Theorem \ref{thm:main} will go as follows: 
%We start from an arbitrary non-standard model $\M=\stru{\M^+}{\cdot}\models\PA$ (where $\M^+$ denotes the \uvz{Presburger reduct} of $\M$) and construct models of linear arithmetics as expansions $\stru{\M^+}{a_0,\ldots}$ of the additive part $\M^+$ by multiplication by appropriately chosen elements $a_0,\ldots\in M$. 
We will prove  Theorem \ref{thm:main} in two steps. 
First, in subsection \ref{sect:modLA3}, we find three elements $a,b,c\in M$ such that $\cdot\restr [0,L]^2$ is $\emptyset$-definable in $\stru{\M^+}{a,b,c}\models\LA[3]$ ($a,b,c$ here again standing for scalar multiplication by elements $a,b,c$). Then, in subsection \ref{sect:modLA2}, we show that we can equivalently replace the scalars $a,b,c$ in the definition of the multiplication by certain functions definable from only two scalars $a',b'$, and thus obtain a $\emptyset$-definition of $\cdot\restr [0,L]^2$ in $\stru{\M^+}{a',b'}\models\LA[2]$.

\subsection{A wild model of $\LA[3]$}
\label{sect:modLA3}

Let a non-standard $\M=\stru{\M^+}{\cdot}\models\PA$ and $L\in M$ be arbitrary. We will find elements $a,b,c\in\M$ such that $\cdot\restr[0,L]^2$ is $\emptyset$-definable in the extension $\stru{\M^+}{a,b,c}\models\LA[3]$ of $\M^+$ by scalar multiplication by $a,b,c$. Note that any choice of $a,b,c$ yields a model of $\LA[3]$, so we only need to take care about definability of the multiplication.

%\medskip
%
%We pick $c\in M$ as any prime number bigger than $(2L)^2$. To define $a$ and $b$, we recursively choose coefficients $a_i$, $i=0,\ldots,4L-1$ and then set $a,b$ to be the unique coprime numbers such that $\frac{a}{b}=[a_0;\ldots,a_{4L-1}]$. 

\medskip

Let us explain the idea of our construction before going to the details. 
First, we can make things a bit easier by recalling the well-known fact
that it is enough to define in $\stru{\M^+}{a,b,c}$ the function
$x\mapsto x^2$ on the domain [0,2L]. Given the squaring function,
multiplication on $[0,L]^2$ is defined by 
\begin{equation}
\label{eq:multfromsquare}
x\cdot y=\frac{(x+y)^2-x^2-y^2}{2}.
\end{equation}

Let $(z_i)_{i=0}^{4L-1}=(1,1^2,2,2^2,\ldots,2L,(2L)^2)$ represent the required initial segment of $x\mapsto x^2$.
%
%We pick $c\in M$ as any prime number bigger than $(2L)^2$. To define $a$ and $b$, we recursively choose coefficients $a_i$, $i=0,\ldots,4L-1$ and then set $a,b$ to be the unique coprime numbers such that $\frac{a}{b}=[a_0;\ldots,a_{4L-1}]$. 
%
%We are going to choose the $a_i$s in such a way that the numerators $v_i$ of convergents of $\frac{a}{b}$ will encode this initial segment of $x\mapsto x^2$ in the following sense:
%\begin{equation}
%\label{seqzi}
%v_i\ \mathrm{mod}\ c = z_i,
%\end{equation}
%for $i=0,\ldots,4L-1$, where $(z_i)_{i=0}^{4L-1}=(1,1^2,2,2^2,\ldots,2L,(2L)^2)$.
The idea is then to pick the scalar $c$ as any prime number bigger
than $(2L)^2$ and define $a$ and $b$ in such a way that the numerators
$v_i$, $i<4L$, of convergents of the continued fraction
$[a_0;\ldots,a_{4L-1}]$ representing $a/b$ encode the initial segment of $x\mapsto x^2$ in the following sense:

\begin{equation}
\label{seqzi}
z_i = v_i\ \mathrm{mod}\ c,
\end{equation}
for $i=0,\ldots,4L-1$.

By Corollary \ref{corapprox}, the set $\set{v_i}{i<4L}$ is
$\emptyset$-definable in $\stru{\M^+}{a,b,c}$ (in fact in
$\stru{\M^+}{a,b}$). Then combining this definition with \refer{eq:multfromsquare} and \refer{seqzi} easily gives the sought definition of multiplication on $[0,L]^2$.

\medskip

%Let us now spell out the construction precisely. 
Now we describe the construction in detail. As mentioned above, we
choose $c$ to be any prime bigger than $(2L)^2$. In order to define
$a$ and $b$, we recursively choose the coefficients $a_i$ of the
continued fraction $[a_0;\ldots,a_{4L-1}]$ in such a way that
\refer{seqzi} holds true for every $0\leq i<4L$ 
with the numerators $v_i$ computed from $a_i$s using
\refer{uvrek}. Then we take $a$ and $b$
coprime such that $a/b=[a_0;\ldots,a_{4L-1}]$.  

Let $0\leq i <4L$ and suppose that we have already defined $a_j$ for all $0\leq j<i$ in such a way that \refer{seqzi} holds. 
Notice that no $v_j$ with $-1\leq j<i$ is divisible by $c$, since $v_{-1}=1$ and for $j\geq 0$, $v_j \equiv z_j \mod c$ and $0<z_j\leq (2L)^2<c$. Therefore there is $a_i>0$ such that
$$
z_i\equiv v_i = v_{i-1}a_i + v_{i-2} \mod c,
$$
i.e. \refer{seqzi} holds for $i$.

\medskip
It remains to show that with $a,b,c$ defined in this way, we can find an $\linlang[3]$-formula which defines $x\mapsto x^2$ on $[0,2L]$ in $\stru{\M^+}{a,b,c}$.

Let $\gamma(u,v)$ be the $\linlang[3]$-formula
$$
\gamma(u,v)\!:\ \ (\forall u',0<u'\leq u)(\forall v',0\leq v' \leq a)((u,v)\neq(u',v')\limp|au-bv|<|au'-bv'|).
$$
(This is $0^*$ from Corollary \ref{corapprox} without the bound $u'\leq
b$.) 
%Here and further on, for the sake of brevity, we write just $a$ instead of the $a\cdot 1$, and similarly for $b$ and $c$.
Then the $\linlang[3]$-formulas 
\begin{eqnarray}
V(v)\!:& & (\exists u, 0<u\leq b)\gamma(u,v), \\
V_0(v)\!:& & (\exists u, 0<u\leq b)(\gamma(u,v) \land  au-bv>0),\\
V_1(v)\!:& & (\exists u, 0<u\leq b)(\gamma(u,v) \land  au-bv\leq 0),
\end{eqnarray}
define the sets $V=\set{v_i}{0\leq i<4L}$, $V_0=\set{v_i}{0\leq i<4L$ and $ i $ even$}$ and $V_1=\set{v_i}{0\leq i<4L$ and $ i $ odd$}$ respectively in $\stru{\M^+}{a,b,c}$. For $V$, this follows directly from Corollary \ref{corapprox}. The cases of $V_0$ and $V_1$ are implied by \refer{eq:riuivi} with the case $au-bv=0$ falling into $V_1$, as this is only possible for $i=4L-1$ which is odd.

Notice also that since the sequence $(v_i)$ is increasing, we can define the set of all pairs $(v_{2i},v_{2i+1})$ with $i<2L$ by:
$$
\pi(v,v')\!:\ \ V_0(v) \land V_1(v') \land \lnot(\exists w, v<w<v')V(w). 
$$

Finally, we define $x\mapsto x^2$ on $[0,2L]$ by:
$$
x^2=y \lekv x=y=0 \lor (\exists v,v') (0\leq v,v' \leq a \land\pi(v,v') \land x= v\!\!\!\!\mod c \land y = v'\!\!\!\!\mod c),
$$
where, of course, $z= w\!\!\mod c$ stands for $0\leq z < c \land
(\exists m)( 0 \leq m\leq w\land w=z+cm)$.

\medskip

We denote the formula on the right hand side of  the previous
equivalence by $\sigma(x,y)$. Notice that this is a bounded formula
(even bounded by constant terms) and does not contain parameters from
$M$. The same holds true about the formula which defines multiplication on $[0,L]^2$ using \refer{eq:multfromsquare}, as it is equivalent to
$$
\mu(x,y,z)\!:\ \ (\exists p,q,r)( 0\leq p,q,r < c\land\sigma(x,p) \land \sigma(y,q) \land \sigma(x+y,r) \land 2z+p+q=r),
$$
where all three numbers $r=(x+y)^2$, $p=x^2$, $q=y^2$ can be bounded by $(2L)^2$ and therefore by $c$.

\begin{rmk}
Note that in the above construction we did not use any specific property of the constructed squaring function besides that it is nonzero, its range is bounded in $\M$ and that it is coded in $\M$ (via G\"odels coding of finite sets). A slight modification of this construction could be therefore used to yield a nonstandard segment of any unary function $f$ (or, with only little more modifications, even any $n$-ary relation $R$ for arbitrary $n$) on $M$ that is coded in $\M$. (The requirement of being nonzero can be easily overcome by consructing $f+1$ instead of $f$ and subtracting $1$ at the end, and of course any coded relation is bounded in all coordinates in $M$.) 
%This is not surprising, as any such segment can be encoded in a sufficiently large segment of the multiplication.
\end{rmk}

\subsection{A wild model of $\LA[2]$}
\label{sect:modLA2}

Let $\stru{\M^+}{a,b,c}$ be the model from the previous subsection. We
will show that the multiplication on $[0,L]^2$ is $\emptyset$-definable in
the structure $\stru{\M^+}{ac,abc^2+c}\models\LA[2]$ (where again $ac$
and $abc^2+c$ stand for the functions of scalar multiplication by
these two elements). If we could prove that scalar multiplication by
$a$, $b$ and $c$ is definable using scalar multiplication by $ac$ and
$abc^2+c$, we would be done, but this is not the case. We are only
able to define the elements $a1$, $b1$ and $c1$. We would also be
done, if we could define scalar multiplication by $ac$, $bc$ and $c$,
because the formula defining partial multiplication is homogeneous in
$a$ and $b$. We do have $ac$, but we are not able to define $bc$ and
$c$. However, what we can do is to define scalar multiplication by
$bc$ and $c$ \emph{restricted to the interval} $[0,a1]$, which 
suffices for our purpose. 

Let $\alpha^*x:=acx$ for all $x$, $\beta^*x=bcx$ for $0\leq x \leq a$,
$\gamma^*x= cx$ for $0\leq x \leq a$ and let $\gamma^*x=\beta^*x=0$
for $x>a$. We will modify the formula $\mu$ defined above as
follows. We keep $a1$, $b1$ and $c1$ in the inequalities that
determine the range of quantification. (In the formula we used just
letters $a$, $b$ and $c$ for the sake of brevity; now we have to be
more careful.) We replace the other occurrences of scalar
multiplication by $a$, $b$ and $c$ by the functions $\alpha^*$,
$\beta^*$ and $\gamma^*$ respectively. We will denote the resulting
formula by $\mu'(x,y,z)$.

% \medskip
% 
% We will proceed in two steps. First, let $\mu'(x,y,z)$ be the formula made from $\mu(x,y,z)$ by replacing all constant terms $a,b,c$ ($=a1,b1,c1$) by parameters $a,b,c\in M$. Then $\mu'$ defines multiplication on $[0,L]^2$ in $\stru{\M^+}{a,b,c}$ from parameters $a,b,c$.
% In the first step, we show that $\mu'(x,y,z)$ defines (from parameters $a,b,c$) multiplication on $[0,L]^2$ also in the $\linlang[3]$-structure $\stru{\M^+}{\alpha*,\beta*,\gamma*}$ where $\alpha*,\beta*,\gamma*$ are certain unary functions (of partial scalar multiplication). In the second step, we then prove that $\alpha*,\beta*,\gamma*$ and constants $a,b,c$ are all $\emptyset$-definable in $\stru{\M^+}{ac,abc^2+c}$ and hence the multiplication on $[0,L]^2$ is $\emptyset$-definable there.
% 
% \medskip
% 
% First step: Let $\alpha*x=acx$ for all $x$, $\beta*x=bcx$ for $0\leq x \leq a$, $\gamma*x= cx$ for $0\leq x \leq a$ and let $\gamma*x=\beta*x=0$ for all other $x$. 

%First we prove that for all $x,y,z\in M$ the formula $\mu'(x,y,z)$ is
%true in $\stru{\M^+}{a,b,c}$ if and only if it is true in
%$\stru{\M^+}{\alpha^*,\beta^*,\gamma^*}$. In what follows, for a
%subformula $\varphi$ of $\mu$, we denote by $\varphi'$ the
%corresponding subformula of $\mu'$.

First we prove that $\mu(x,y,z)\IFF\mu'(x,y,z)$ for all $x,y,z\in M$. In what follows, for a
subformula $\varphi$ of $\mu$, we denote by $\varphi'$ the
corresponding subformula of $\mu'$.

%, i.e. $\varphi$ with constant terms $a,b,c$ replaced by parameters $a,b,c$.

%(Notice that to evaluate \mu in $\stru{\M^+}{\alpha^*,\beta^*,\gamma^*}$, we must not only replace scalars $a,b,c$ by $\alpha^*,\beta^*\gamma^*$ but also the constants $a,b$, with $ac,bc$.

%First let $\gamma'(u,v)$ denote the formula created from $\gamma(u,v)$ by replacing the bounded quantifier $(\forall 0\leq v' \leq a)$ by unbounded $\forall v'$. In $\stru{\M^+}{a,b,c}$ it is $\gamma\lekv\gamma'$ by Corollary \ref{corapprox}. It is easy to see that $\stru{\M^+}{a,b,c}\models \gamma'(u,v)$ if and only if $\stru{\M^+}{\alpha^*,\beta^*,\gamma^*}\models \gamma'(u,v)$ as for any $v$ the value of  $\beta^*v$ is $bcw$ for some $0\leq w \leq a$ (either $bcv$ or $0=bc\cdot 0$). Therefore we have

We observe that during the evaluation of $\mu(x,y,z)$, the subformulas
$V(v)$, $V_0(v)$ and $V_1(v)$ are only evaluated for $0\leq v \leq
a$. For $0\leq v,v' \leq a$ (and any $u,u'$), we have 
\[
|au-bv|<|au'-bv'| \IFF |\alpha^*u-\beta^*v|<|\alpha^*u'-\beta^*v'|
\]
and similarly for $au-bv>0$ and $au-bv\leq 0$. Therefore, for $0\leq v
\leq a$, we get 
\[
V(v) \IFF V'(v)
\]
and the same for $V'_0$, $V'_1$. Consequently, for $0\leq u,v \leq
a$, 
\[
\pi(u,v) \IFF \pi'(u,v). 
\]
Further, for $0\leq w \leq a$ and any $z$, we get that 
$$z=w\!\!\mod c \IFF (z=w\!\!\mod c)'.$$
(Note that $z=w\!\!\mod c$ means $z\equiv w \mod c \land 0\leq z < c1$.)
%(Note that this formula is not a mere congruence but a stronger statement: that $z$ is the remainder of $w$ divided by $c$.) 
From this, it is easy to see
that the same equivalence holds true also for $\sigma(x,y)$ and consequently for $\mu(x,y,z)$.
%$\stru{\M^+}{a,b,c}\models \sigma'(x,y) \IFF \stru{\M^+}{\alpha^*,\beta^*,\gamma^*}\models \sigma'(x,y)$ and consequently 

\medskip

It remains to find definitions of $\beta^*,\gamma^*,a1,b1,c1$ in $\stru{\M^+}{ac,abc^2+c}$. Let us denote by $\alpha = ac$, $\beta = bc$. Then $abc^2+c = \alpha\beta+c$. 

% First notice that $\alpha^*$ is in $\stru{\M^+}{\alpha,\alpha\beta+\gamma}$ $\emptyset$-definable trivially as
% $$\alpha^*x = \alpha x.$$
%
To define $\gamma^*$, we first define an auxiliary function $\gamma^\circ$ by
\[
\gamma^\circ x = ((\alpha\beta+c)x)\!\!\!\!\mod\alpha.
\]
Notice that for $0\leq x < a1$, $\gamma^\circ x = cx = \gamma^*x$, but $\gamma^\circ a = 0 \neq \gamma^*a$. This enables us to write down parameter-free definitions of $a1,c1$ and $\gamma^*$ in $\stru{\M^+}{\alpha,\alpha\beta+c}$:

\begin{eqnarray*}
a1 &=& \min\set{x>0}{\gamma^\circ x = 0},\\
c1 &=& \gamma^\circ 1,
\end{eqnarray*}

and

$$
\gamma^*x = 
\begin{piecewise} 
\gamma^\circ x & \textrm{for\ } 0\leq x < a1,\\
\alpha 1& \textrm{for\ } x = a1,\\
0 & \textrm{otherwise.} 
\end{piecewise}
$$

To define $\beta^*$, we again start with a definition of an auxiliary function $\beta^\circ$ 
$$
\beta^\circ x = ((\alpha\beta+c)x)\ \mathrm{div}\ \alpha,
$$
where the  function $ u\ \mathrm{div}\ \alpha$ is defined by $w= u\ \mathrm{div}\ \alpha \lekv \alpha w \leq u < \alpha(w+1)$.
Again, it is not difficult to see that $\beta^\circ x = bcx = \beta^*x $ for $0\leq x < a$, but $\beta^\circ a = abc+1 \neq \beta^*a$, which we can use to $\emptyset$-define $b1$ and $\beta^*$ in $\stru{\M^+}{\alpha,\alpha\beta+c}$ as follows:

$$
b1 = (\beta^\circ a)\ \mathrm{div}\ \alpha,
$$

and 

$$
\beta^*x = 
\begin{piecewise} 
\beta^\circ x & \textrm{for\ } 0\leq x < a1,\\
\beta^\circ x - 1& \textrm{for\ } x = a1,\\
0 & \textrm{otherwise.} 
\end{piecewise}
$$

Finally note that the $\linlang[2]$-definition $\mu''(x,y,z)$ of
multiplication on $[0,L]^2$ in
$\stru{\M^+}{\alpha,\alpha\beta+c}$ that we just constructed 
is a bounded formula, because it was constructed from $\mu'$ by
substituting definitions of functions $\alpha^*,\beta^*,\gamma^*$ and
constants $a1,b1,c1$ to appropriate places. It can easily be seen
that during the evaluation of $\mu'$ in
$\stru{\M^+}{\alpha^*,\beta^*,\gamma^*}$ the function $\alpha^*x$ is
evaluated only for $0\leq x\leq b$ and $\beta^*x,\gamma^*x$ only for
$0\leq x \leq a$. Therefore, always
$0\leq\alpha^*x,\beta^*x,\gamma^*x<abc1<(\alpha\beta+c)1$. Clearly
also $0\leq a1,b1,c1<(\alpha\beta+c)1$ and thus the existentially
quantified variables in  $\mu''$ can be bounded by $(\alpha\beta+c)1$.

\section{A non-NIP discretely ordered module}
\label{sect:nonNIP}
%Recall that any model of $\LA[\kappa]$ naturally corresponds to a discretely ordered module (see Section \ref{subsect:modules}).

A structure $\A$ is NIP (not independence property; see \cite{NIPguide} for an extensive introduction to NIP structures and theories) if there is no formula $\varphi(\vect{x},\vect{y})$ such that for every $n\in\omega$, there are $\vect{a_i}\in A^{l(\vect{x})}$, with $i<n$, and $\vect{b_J}\in A^{l(\vect{y})}$, with $J\subseteq n$, such that 
$$
\varphi(\vect{a_i},\vect{b_J}) \IFF i\in J.
$$

\medskip

Chernikov and Hils \cite[Question 5.9.1]{ChH14} asked whether all ordered modules are NIP. We answer their question negatively:

Let $\stru{\M^+}{a,b}\models\LA[2]$ be a model in which a multiplication (function $\cdot$ satisfying $x\cdot 0 = 0$ and $x(y+1)=xy+x$) is definable on $[0,L]^2$ for some non-standard $L$ (such models exist by Theorem \ref{thm:main}) and let $\A=\stru{M}{0,+,-,\leq,r}_{r\in R}$ be the discretely ordered module corresponding to $\stru{\M^+}{a,b}$ (see subsection \ref{subsect:modules}).

\begin{prop}
The discretely ordered module $\A$ is not NIP.
\end{prop}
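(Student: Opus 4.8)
The plan is to produce a single module-formula witnessing the independence property, using the definable partial multiplication to express divisibility and then coding arbitrary subsets by squarefree products of primes.

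The first step I would take is a definability transfer. The module $\A=\stru{M}{0,+,-,\leq,r}_{r\in R}$ and the structure $\stru{\M^+}{a,b}$ define exactly the same relations: the scalars $a,b$ are themselves among the ring elements $r\in R$, so multiplication by $a$ and by $b$ are basic operations of $\A$, while the constant $1$ of $\linlang[2]$ is $\emptyset$-definable in $\A$ as the least positive element of the discretely ordered module; conversely every scalar $r\in R$ is a $\Z$-polynomial in $a,b$ and hence definable in $\stru{\M^+}{a,b}$ from $+,-,a,b$. Thus the two structures are interdefinable, and since $\cdot\restr[0,L]^2$ is $\emptyset$-definable in $\stru{\M^+}{a,b}$ by Theorem \ref{thm:main}, it is also definable in $\A$. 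In particular, divisibility on the initial segment is definable in $\A$ by
$$\delta(x,y):\quad 0<x\leq L\,\land\,0\leq y\leq L\,\land\,(\exists z\leq L)\ x\cdot z=y,$$
where $\cdot$ is the definable partial multiplication. Note that if $0<x$ and $x\cdot z=y\leq L$ then automatically $z\leq y\leq L$, so the bound on $z$ loses nothing and $\delta$ genuinely expresses $x\mid y$ for $x,y\in[0,L]$.

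Next I would show that $\delta$ has the independence property. Fix $n\in\N$, let $a_i=p_i$ be the $i$-th prime ($i<n$), and for each $J\subseteq n$ put $b_J=\prod_{i\in J}p_i$ (with $b_\emptyset=1$). Since $n$ is standard, the finitely many elements $p_i$ and $b_J$ are all standard, hence below the non-standard $L$; together with the witnesses $z=b_J/p_i\leq b_J\leq L$ they all lie in $[0,L]$, so the partial multiplication applies and, since $\M\models\PA$, the formula $\delta$ computes genuine divisibility on these inputs. Because the $p_i$ are distinct primes we have $p_i\mid b_J\IFF i\in J$, that is, $\delta(a_i,b_J)\IFF i\in J$ for all $i<n$ and $J\subseteq n$. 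As the same formula $\delta$ serves for every $n$, it witnesses the independence property, and therefore $\A$ is not NIP.

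The two points needing care — and the only places where the earlier work is really used — are the interdefinability of $\A$ with $\stru{\M^+}{a,b}$, which guarantees that $\delta$ is a bona fide formula in the module language, and the role of the non-standardness of $L$. I would stress that no single configuration need be realized for all $n$ at once: the definition of IP quantifies over $n$ separately, and for each fixed standard $n$ the non-standard $L$ is large enough to contain all the standard parameters $p_i$, $b_J$ and the relevant witnesses inside the segment $[0,L]$ on which multiplication, and hence $\delta$, is defined. I do not expect a serious obstacle here; the difficulty has already been absorbed into Theorem \ref{thm:main}.
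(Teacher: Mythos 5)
Your proof is correct and follows essentially the same route as the paper's: transfer the $\emptyset$-definition of the partial multiplication from $\stru{\M^+}{a,b}$ to the module $\A$, define divisibility on an initial segment, and witness the independence property with the $i$-th primes $a_i=p_i$ and the products $b_J=\prod_{i\in J}p_i$ for each standard $n$. The only detail the paper is more explicit about is that, since $\A$ contains negative elements, every quantifier in the transferred formula must be relativized to $0\leq x$ (and the scalars $a,b$ replaced by the corresponding ring elements); this is routine and is absorbed into your ``interdefinability'' step.
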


\begin{proof}

Let $\psi$ define multiplication $\cdot$ on $[0,L]^2$ in
$\stru{\M^+}{a,b}$. We construct a formula $\psi'$ by replacing
possible occurrences of the constant $1$ in $\psi$ by the definition of
$1$ in $\A$ (the least positive element), replacing every quantifier
$(Qx)$ by $(Qx, 0\leq x)$ and replacing scalars $a,b$ by scalars $q,r\in R$
representing the same functions on $M$ as $a,b$ do. Then the formula $x,y,z\geq 0 \land \psi'(x,y,z)$ defines $\cdot$ on $[0,L]^2$ in $\A$. 

Clearly $\cdot\restr \N^2$ is the usual multiplication on $\N$ and the
formula $\varphi(x,y)\!:\ (\exists z, 0\leq z \leq y)z\cdot x = y$ when restricted to $\N^2$ defines the usual divisibility relation.

It is now easy to prove that the $\varphi$ has the independence property:
Let $n\in\omega$ be given. For $i<n$ take $a_i$ the $i$-th prime number in $\N$ and for $J\subseteq n$ take $b_J=\prod_{i\in J}a_i$.
\end{proof}

\section{Open problems}

What is the strongest possible quantifier elimination result for $\LA[\kappa]$ with $\kappa\geq 2$? Can definable sets in models of $\LA[\kappa]$ be precisely characterized?

Is there a model of $\sLA[\kappa]$ with $\kappa\geq 2$ whose first-order theory is model complete/NIP? Can those models be characterized?

\bibliography{biblio_final}{}
\bibliographystyle{amsalpha}

\end{document}